\theoremstyle{plain}
\newtheorem{thm}{Theorem}
\newtheorem{cor}[thm]{Corollary}
\newtheorem{lemma}[thm]{Lemma}
\theoremstyle{definition}
\theoremstyle{remark}
\newcommand{\BC}{{\mathbb{C}}}
\newcommand{\BE}{{\mathbb{E}}}
\newcommand{\BP}{{\mathbb{P}}}
\newcommand{\BQ}{{\mathbb{Q}}}
\newcommand{\BR}{{\mathbb{R}}}
\newcommand{\BZ}{{\mathbb{Z}}}
\newcommand{\CM}{{\mathcal M}}
\newcommand{\CN}{{\mathcal N}}
\newcommand{\CO}{{\mathcal O}}
\newcommand{\CX}{{\mathcal X}}
\DeclareMathOperator{\Sym}{Sym}
\newcommand{\pr}{\mathop{\rm pr}\nolimits}
\newcommand{\Bl}{\mathop{\rm Bl}\nolimits}
\newcommand{\Pic}{\mathop{\rm Pic}\nolimits}
\newcommand{\Mbar}{{\overline M}}
\newcommand{\End}{{\rm End}}
\newcommand{\Hdg}{{\rm Hdg}}
\newcommand{\Image}{{\rm Im}}
\newcommand{\SO}{{\rm SO}}
\newcommand{\Orth}{{\rm O}}
\newcommand{\ev}{\mathop{\rm ev}\nolimits}
\newcommand{\one}{\mathsf{1}}
\newcommand{\pt}{{\mathsf{p}}}
\newcommand{\sfa}{{\mathsf{a}}}
\newcommand{\sfb}{{\mathsf{b}}}
\newcommand{\GW}[2] {\Omega^{#1}_{#2} }
	\def\MR#1{}
\begin{document}
\baselineskip=16pt
\parskip=5pt

\title[Gromov--Witten classes of K3 surfaces]
{Gromov--Witten classes of K3 surfaces}
\author {Tim-Henrik Buelles}
%\address {Department of Mathematics}
\address {ETH Z\"urich, Department of Mathematics}
\email {buelles@math.ethz.ch}
\date{\today}
\begin{abstract} We study the cycle-valued reduced Gromov--Witten theory of a nonsingular projective K3 surface. For primitive curve classes, we prove that the correspondence induced by the reduced virtual fundamental class respects the tautological rings. Our proof uses monodromy over the moduli space of K3 surfaces, degeneration formulae and virtual localization. As a consequence of the monodromy argument, we verify an invariance property for Gromov--Witten invariants of K3 surfaces in primitive curve class conjectured by Oberdieck--Pandharipande.
\end{abstract}

\maketitle
	
	\setcounter{section}{-1}
	
\section{Introduction}
\subsection{Gromov--Witten classes}

Let $\Mbar_{g,n}(X,\beta)$ be the moduli space of stable maps from connected genus $g$ curves with $n$ marked points to a target variety $X$, representing the curve class $\beta\in H_2(X,\BZ)$.\footnote{We will always assume $\beta$ effective and $2g-2+n >0$.} The moduli space comes with natural projection and evaluation maps
\[\begin{tikzcd}
	&\Mbar_{g,n}(X,\beta) \ar[dl,"\pi",swap] \ar[dr,"\ev"] & \\
	\Mbar_{g,n} && X^n.
\end{tikzcd}\] 
The action of the virtual fundamental class defines two series of cycles:
\begin{align*} Z^X_{g,n,\beta}(\alpha) &= \ev_*\left(\pi^* \alpha\cap[\Mbar_{g,n}(X,\beta)]^{vir} \right)\in H^*(X^n)\, , &\alpha&\in H^*(\Mbar_{g,n})\, ,
\\ \intertext{and, considering the action in the opposite direction,} \Omega^X_{g,n,\beta}(v)&=\pi_*\left(\ev^*v \cap [\Mbar_{g,n}(X,\beta)]^{vir}\right)\in H^*(\Mbar_{g,n})\, , &v&\in H^*(X^n)\, . 
\end{align*}
The latter are sometimes called \textit{Gromov--Witten classes}. Classical Gromov--Witten invariants are obtained by integrating against tautological classes on $\Mbar_{g,n}$. Our goal here is to study these classes for a nonsingular projective K3 surface $S$. Since ordinary Gromov--Witten theory of a K3 surface vanishes for non-zero curve classses, we replace the virtual class by the \textit{reduced virtual class}\footnote{We refer to \cite{KT14} for a modern treatment of the reduced obstruction theory.}
\[[\Mbar_{g,n}(S,\beta)]^{red}\, . \]
To locate the Gromov--Witten classes in cohomology, we consider the tautological subrings~\cite{FP05}
\[ R^*(\Mbar_{g,n}) \subset H^*(\Mbar_{g,n})\, , \] 
spanned by push-forwards of products of $\psi$ and $\kappa$ classes on boundary strata. On the K3 side, we consider the subrings~\cite{Vo08}
\[R^*(S^n)\subset H^*(S^n) \]
generated by the diagonals and the pullbacks of $\beta$ under the projection maps.\footnote{The dependence on $\beta$ is suppressed in the notation. Also, we view $\beta$ as a cohomology class under the natural isomorphism  $H_2(S,\BZ)\stackrel{\sim}{=}H^2(S,\BZ)$.} 

We prove that the reduced virtual class yields a correspondence between tautological rings.
\begin{thm}\label{thm:1}
	Let $\beta\in H_2(S,\BZ)$ be primitive and $\alpha\in R^*(\Mbar_{g,n})$, then
	\[Z^S_{g,n,\beta}(\alpha) \in R^*(S^n)\, . \]
\end{thm}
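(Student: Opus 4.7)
The plan is to lift the problem to the moduli space $\CM_{2h}$ of quasi-polarized K3 surfaces of degree $2h = \beta^2$, deduce monodromy invariance of $Z^S_{g,n,\beta}(\alpha)$ from the deformation invariance of the reduced virtual class, and conclude via classical invariant theory for the orthogonal monodromy group.

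First I would form the universal family $\CS \to \CM_{2h}$ with universal polarization $\beta$. Since $\beta$ is primitive, the pair $(S,\beta)$ varies in the smooth family $\CM_{2h}$, and the relative moduli stack $\Mbar_{g,n}(\CS/\CM_{2h},\beta)$ carries a reduced virtual class of constant virtual dimension, by Maulik--Pandharipande--Thomas deformation invariance of the reduced obstruction theory. Performing $\ev_*(\pi^*\alpha \cap -)$ relatively produces a flat section of the local system $R^{\bullet}(\pr^n)_*\BQ$ on $\CM_{2h}$, so in particular $Z^S_{g,n,\beta}(\alpha) \in H^*(S^n)$ is invariant under the monodromy representation $\rho \colon \pi_1(\CM_{2h}) \to \mathrm{GL}(H^*(S^n))$.

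Next I would identify the invariant subspace. Classical results on the period map for K3 surfaces imply that $\rho$ acts trivially on $\BQ \beta \subset H^2(S,\BQ)$ and has Zariski-dense image in $\Orth(\beta^\perp \otimes \BQ)$. Via the K\"unneth decomposition $H^*(S^n) = H^*(S)^{\otimes n}$, the first fundamental theorem of invariant theory for the orthogonal group then implies that monodromy invariants are spanned by iterated pairwise contractions of the $\beta^\perp$-components against the intersection form, together with arbitrary insertions of the invariant classes $1$, $\beta$, and $[\mathrm{pt}] = \frac{1}{2h}\beta^2$. A pairwise contraction on factors $i$ and $j$ corresponds, under K\"unneth, to a specific combination of $\Delta_{ij}$ and corrections built from $\pr_i^*\beta, \pr_j^*\beta$, all of which lie in $R^*(S^n)$; the remaining invariant insertions contribute the pullbacks $\pr_i^*\beta$ and their squares. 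Hence every monodromy-invariant class of $H^*(S^n)$ lies in $R^*(S^n)$, which proves the theorem.

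The main obstacle will be pinning down the monodromy statement sharply enough for the quasi-polarized moduli (rather than for the full Hodge-theoretic period domain), and justifying the invariance of the reduced class at the cycle level as opposed to only at the level of integrated invariants. This is likely where the paper's degeneration formulae and virtual localization techniques enter: one can reduce to a convenient K3 family, such as an elliptic pencil, where the monodromy action is transparent and a tautological representative of $Z^S_{g,n,\beta}(\alpha)$ can be matched fiberwise with an explicit element of $R^*(S^n)$.
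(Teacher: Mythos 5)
Your first step (monodromy invariance of $Z^S_{g,n,\beta}(\alpha)$ via deformation invariance of the reduced class over the moduli of quasi-polarized K3 surfaces) matches the paper. The gap is in the second step: you assert that the monodromy has Zariski-dense image in $\Orth(\beta^\perp\otimes\BQ)$ and conclude via the first fundamental theorem that the invariants are exactly the contraction classes, i.e.\ $R^*(S^n)$. What Borel density actually guarantees for the arithmetic monodromy group is only density in $\SO(T)$, $T=\langle\beta\rangle^\perp$, and this is all the paper claims; upgrading to $\Orth(T)$ would require a separate argument (e.g.\ exhibiting a determinant $-1$ element such as a reflection in a $(-2)$-class in the monodromy of the chosen family), which you do not supply. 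This matters because $\dim T=21$, so for $n\geq 21$ the $\SO(T)$-invariants of $H^{\otimes n}$ strictly contain the $\Orth(T)$-invariants: there is an extra summand $(H^{\otimes n})^{\Orth(T),\det}$ built from the volume form $\Lambda\in\Lambda^{21}T$, and these classes are \emph{not} in $R^*(S^n)$. So monodromy invariance alone only gives $Z^S_{g,n,\beta}(\alpha)=A+B$ with $A\in R^*(S^n)$ and $B$ in the sign-representation part, and your argument has no mechanism to kill $B$.

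Eliminating $B$ is the actual technical content of the paper's proof: pairing against a dual class reduces it to showing that reduced invariants with an odd number of insertions of a fixed class $\gamma\in T$ vanish (Lemma~\ref{lemma:Vanishing}). That lemma is proved by a lexicographic induction on $(g,n)$ using the strong Getzler--Ionel vanishing of \cite{FP05}, the splitting of the reduced class over boundary strata, and --- to handle point insertions --- degeneration to the normal cone of an elliptic fiber together with the upper-triangular relation between absolute and relative invariants. This is also precisely where primitivity of $\beta$ enters (through the degeneration formula and genus reduction); your proposal never uses primitivity, which is a warning sign, since the statement for imprimitive classes is open. Finally, your closing guess that degeneration and localization are used to ``match a tautological representative fiberwise'' is off: in the paper those tools appear in the proof of Theorem~\ref{thm:2} and in the point-insertion step of the vanishing lemma, not as a substitute for the monodromy analysis.
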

\begin{thm} \label{thm:2}
	Let $\beta\in H_2(S,\BZ)$ be primitive and $v\in R^*(S^n)$, then
	\[ \Omega^S_{g,n,\beta}(v)\in R^*(\Mbar_{g,n})\, . \]
\end{thm}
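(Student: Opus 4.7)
My plan is to mirror the proof of Theorem \ref{thm:1} on the $\Omega$-side of the correspondence. The monodromy argument that controls $Z^S$ relies on the fact that the class $Z^S(\alpha)$ lives on $S^n$ and is monodromy-invariant under the local system over the moduli of primitively polarized K3 surfaces, so it lies in the ring of invariants, which coincides with $R^*(S^n)$. The class $\Omega^S(v)$ lives on $\Mbar_{g,n}$ and so has no direct monodromy to invoke; to produce an explicit tautological expression one must fall back on the other two tools (the degeneration formula and virtual localization), with monodromy/deformation invariance playing only the auxiliary role of reducing to a convenient K3.

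By deformation invariance of the reduced virtual class, and because the generators of $R^*(S^n)$ (partial diagonals and pullbacks of $\beta$) are defined universally on the moduli space of primitively polarized K3 surfaces, the cohomology class $\Omega^S_{g,n,\beta}(v) \in H^*(\Mbar_{g,n})$ depends only on $\beta^2$. I would therefore specialize $S$ to an elliptic K3 with a section, taking $\beta = B + hE$ for $B$ the section class and $E$ a fiber class, and degenerate $S$ into a union $R_1 \cup_E R_2$ of two rational elliptic surfaces glued transversely along a smooth elliptic fiber. The cycle-valued degeneration formula for the reduced class then gives
\[ \Omega^S_{g,n,\beta}(v) = \sum_\eta (\xi_\eta)_* \left( \Omega^{(R_1,E)}_{\eta_1}(v_1) \boxtimes \Omega^{(R_2,E)}_{\eta_2}(v_2) \right), \]
where $\eta$ runs over admissible splittings, $\xi_\eta$ is the corresponding boundary gluing morphism into $\Mbar_{g,n}$, and $v_i \in R^*(R_i^{n_i})$ is again built from diagonals and $\beta_i$-pullbacks by the universal nature of $v$. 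Since each $\xi_\eta$ preserves tautological rings, the problem reduces to showing that each relative Gromov--Witten class $\Omega^{(R_i,E)}_{\eta_i}(v_i)$ lies in $R^*(\Mbar_{g_i,n_i+m})$.

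Each rational elliptic surface $R_i$ admits a $\BC^*$-action preserving the elliptic fiber $E$. Applying virtual localization to the relative moduli $\Mbar_{g_i,n_i+m}((R_i,E),\beta_i)$, one expresses each $\Omega^{(R_i,E)}_{\eta_i}(v_i)$ as a sum over torus-fixed loci, with contributions given by products of equivariant Euler factors, $\psi$-classes, and pushforwards from lower-dimensional moduli. Because the global class is non-equivariant, the sum of the residues is a well-defined non-equivariant class, and the explicit form of the contributions places it in $R^*(\Mbar_{g_i,n_i+m})$. Composing with the boundary gluings $\xi_\eta$ then yields $\Omega^S_{g,n,\beta}(v) \in R^*(\Mbar_{g,n})$.

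The main obstacle I expect is the treatment of the odd cohomology of the relative divisor $E$: the Künneth decomposition of the diagonal $\Delta_E \in H^*(E \times E)$ has a component in $H^1(E) \otimes H^1(E)$ that enters the relative gluing axiom, producing insertions that are not a priori tautological on either side. These must be shown to produce only tautological classes on $\Mbar_{g,n}$, either by exploiting the translation action of $E$ on itself and the resulting cancellations on the reduced K3 side, or by a direct computation via the geometry of the relative compactified Jacobian. This is parallel to the technical heart of Theorem \ref{thm:1}, and I expect the same input to suffice here.
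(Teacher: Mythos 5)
Your Step 1 (deformation invariance, specialization to an elliptic K3 with a section, degeneration to $R_1\cup_E R_2$ and the cycle-valued degeneration formula) coincides with the paper's first step. The argument breaks at the next stage: you assert that each rational elliptic surface $R_i$ ``admits a $\BC^*$-action preserving the elliptic fiber $E$'' and propose to localize directly on $\Mbar_{g_i,n_i+m}((R_i,E),\beta_i)$. A rational elliptic surface $\Bl_9(\BP^2)$ (blow-up of the base locus of a general pencil) has discrete automorphism group, so there is no torus to localize with; and no $\BC^*$ can act nontrivially on a pair $(R,E)$ with $E$ a smooth elliptic fiber in any case, since there are no nonconstant algebraic homomorphisms $\BC^*\to \Aut^0(E)\cong E$. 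So the localization step as you state it has no content, and this is the technical heart of the proof, not a routine application of \cite{GP99}.

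The paper's route around this is a second degeneration, to the normal cone $R \rightsquigarrow R\cup_E (E\times\BP^1)$, which separates the two difficulties. The absolute classes of $R$ are handled by deforming $R$ to a toric surface (this deformation destroys the elliptic fibration, which is exactly why the relative divisor must be pushed onto the bubble first) and invoking \cite{GP99}. The relative geometry $(E\times\BP^1,E)$ does carry a fiberwise $\BC^*$-action; localization there reduces to rubber classes over $E$ and to absolute Gromov--Witten classes of the elliptic curve $E$. Two further ingredients are then indispensable and absent from your sketch: a topological recursion relation (plus the dilaton equation) to remove the rubber cotangent classes $\psi_\infty$ at the relative divisor, and the final inputs that the $E$-valued double ramification cycle is tautological \cite{JPPZ18} and that Gromov--Witten classes of target curves are tautological \cite{Ja17}. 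Your closing concern about the odd cohomology of $E$ entering through the K\"unneth decomposition of $\Delta_E$ is legitimate, but it is resolved precisely by these last two results (which allow arbitrary insertions from $H^*(E)$, odd classes included), not by translation actions on $E$ or compactified Jacobians.
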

We expect both results to hold for arbitrary curve classes. A careful study of the degeneration formula for the reduced virtual class is necessary here. Also, the above correpondence is conjectured to hold for algebraic cycles~\cite[Conj.\ 1 and 2]{PY16}. Our arguments however do not apply to Chow groups.

The system of Gromov--Witten classes satisfies certain compatibilities captured by the notion of a Cohomological Field theory~\cite{KM94, Pa17}. Although examples of non-tautological CohFT's exist~\cite{PZ18}, it is not known whether every CohFT that arises from the Gromov--Witten theory of a variety $X$ is in fact tautological.\footnote{This question already appears in~\cite{FP05}.}

Let us point out that Theorem~\ref{thm:2} was previously known for curves~\cite{FP05,Ja17} and varieties equipped with a strong torus action (with finitely many $0$- and $1$-dimensional orbits)~\cite{GP99}. Our proof uses these known cases.

\subsection{Invariance property}
As a consequence of Theorem~\ref{thm:1} we verify an invariance property for Gromov--Witten invariants of K3 surfaces conjectured by Oberdieck--Pandharipande~\cite{OP16}. Let $S$ and $\widetilde{S}$ be two K3 surfaces, and let
\[\varphi\colon\Big(H^2(S,\BR)\, ,\langle,\rangle\Big) 
\rightarrow \Big(H^2(\widetilde{S}, \BR)\, ,\langle,\rangle \Big) \]
be a {\em real isometry} sending
an effective curve class $\beta\in H^2(S,\BZ)$ to an
effective curve class $\widetilde{\beta}\in H^2(\widetilde{S},\BZ)$,
\[\varphi(\beta)= \widetilde{\beta}\, .\]
We extend $\varphi$ to $H^*(S,\BR)$ by
\[\varphi(\one)=\one\, , \ \ \ \ \varphi(\pt)=\pt\, , \]
where $\one,\pt\in H^*(S,\BR)$ are the identity and the point class respectively.

Denote the (reduced) descendent Gromov--Witten invariants of $S$ by \[ \left\langle \prod_{i=1}^n \tau_{a_i}(w_i) \right\rangle_{g,\beta}^S=
\int_{[\overline{M}_{g,n}(S,\beta)]^{red}} \prod_{i=1}^n \psi_i^{a_i}
\cup \text{ev}_i^*(w_i)\ , \ \ \ w_i\in H^*(S,\mathbb{Q})\ .\]
Then the following invariance property holds:
\begin{cor}\label{cor:Inv}
Let $\beta\in H^2(S,\BZ)$ and $\widetilde{\beta}\in H^2(\widetilde{S},\BZ)$ be primitive curve classes, then
\[
\left\langle \prod_{i=1}^n \tau_{a_i}(w_i) \right\rangle_{g,\beta}^S
= 
\left\langle \prod_{i=1}^n \tau_{a_i}(\varphi(w_i)) 
\right\rangle_{g,\widetilde{\beta}}^{\widetilde{S}}.
\]
\end{cor}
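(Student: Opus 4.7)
The plan is to use Theorem~\ref{thm:1} to express each descendent invariant as a sum of intersection numbers of tautological classes on powers of $S$ against the insertions $w_i$, and then transport both factors along the isometry $\varphi$.

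By the standard comparison between the cotangent classes $\psi_i$ on $\Mbar_{g,n}(S,\beta)$ and their pullbacks $\pi^*\psi_i$ from $\Mbar_{g,n}$---whose difference is a boundary class supported on strata with contracted rational tails, each such stratum factoring through a moduli space of stable maps with fewer markings---and iterating, the descendent invariant can be rewritten as a finite sum
\[
\left\langle \prod_{i=1}^n \tau_{a_i}(w_i) \right\rangle^S_{g,\beta}
= \sum_j \int_{S^{n_j}} Z^S_{g,n_j,\beta}(\alpha_j) \cdot \prod_{k=1}^{n_j}\pi_k^* w_{j,k},
\]
where each $\alpha_j\in R^*(\Mbar_{g,n_j})$ is tautological, and each $w_{j,k}$ is one of $w_1,\ldots,w_n$, $\one$, or $\pt$ (the $\one$ and $\pt$ insertions arising from the boundary contractions). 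By Theorem~\ref{thm:1}, each $Z^S_{g,n_j,\beta}(\alpha_j)\in R^*(S^{n_j})$ is a polynomial in the generators $\beta^{(i)}=\pi_i^*\beta$ and the small diagonals $\Delta_{ij}$.

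Next, $\varphi$ extends by K\"unneth to a graded ring isomorphism $\varphi^{\otimes n}\colon H^*(S^n,\BR)\to H^*(\widetilde{S}^n,\BR)$, because multiplication in $H^*(S,\BR)=\langle\one\rangle\oplus H^2(S,\BR)\oplus\langle\pt\rangle$ is determined by the intersection pairing on $H^2$ (preserved by $\varphi$) together with $\one\cdot\pt=\pt$; it also preserves top-degree integration since $\varphi(\pt)=\pt$. Under $\varphi^{\otimes n}$ one has $\beta^{(i)}\mapsto\widetilde{\beta}^{(i)}$ and $\Delta_{ij}\mapsto\Delta_{ij}$, the latter because $\varphi$ interchanges Poincar\'e-dual bases of $H^*(S,\BR)$ in the K\"unneth expansion of the diagonal. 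Granted the universality identity
\[
\varphi^{\otimes n_j}\bigl(Z^S_{g,n_j,\beta}(\alpha_j)\bigr) \;=\; Z^{\widetilde S}_{g,n_j,\widetilde\beta}(\alpha_j), \qquad (\ast)
\]
applying $\varphi^{\otimes n_j}$ to each term of the identity in the second paragraph yields the corollary.

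The crux is $(\ast)$: the polynomial in $\beta^{(i)},\Delta_{ij}$ representing $Z^S_{g,n_j,\beta}(\alpha_j)$ must be the same for every K3 surface of given square $\beta^2$. This is not a formal consequence of Theorem~\ref{thm:1} but is precisely the output of the monodromy / deformation-invariance step underlying its proof: the moduli space of primitively quasi-polarized K3 surfaces of fixed degree $\beta^2$ is connected, the reduced cycle-valued GW theory is deformation invariant over this family, and the generators $\beta^{(i)},\Delta_{ij}$ are flat along it. Consequently the polynomial depends only on $(g,n_j,\alpha_j,\beta^2)$, and $(\ast)$ follows---consistent with the abstract's assertion that Corollary~\ref{cor:Inv} is a consequence of the monodromy argument.
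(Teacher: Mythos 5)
Your proposal is correct and follows essentially the same route as the paper: both reduce the claim to showing that $\varphi$ carries the (tautological, by Theorem~\ref{thm:1}) Gromov--Witten class of $(S,\beta)$ to that of $(\widetilde{S},\widetilde{\beta})$, which holds because $\varphi$ sends the generators $\beta^{(i)}, \Delta_{ij}$ of $R^*(S^n)$ to the corresponding generators of $R^*(\widetilde{S}^n)$ and because the two classes are matched under parallel transport in a connecting family of quasi-polarized K3 surfaces of degree $\langle\beta,\beta\rangle$. You are in fact more explicit than the paper on two points it compresses into ``Applying Theorem~\ref{thm:1}'': the comparison of descendent $\psi$-classes with those pulled back from $\Mbar_{g,n}$, and the deformation-invariance argument behind your identity $(\ast)$.
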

The equality is expected to hold for all curve classes $\beta$ and $\widetilde{\beta}$ of the same divisibility. The missing ingredient for imprimitive curve classes is a degeneration formula for the reduced virtual class and a suitable genus reduction.\footnote{See the comment in the proof of Lemma~\ref{lemma:Vanishing}.} Once established, our proof may be extended to yield the full invariance property.

Corollary~\ref{cor:Inv} now fully justifies the formulation of the Multiple Cover Conjecture for K3 surfaces~\cite[Conj.\ C2]{OP16}.
\subsection{Outline}
Section~\ref{sec:Monodromy} contains a recollection of the key facts about monodromy over the moduli space of K3 surfaces and Hodge groups. We then explain how to use these facts to prove Theorem~\ref{thm:1}. The key technical ingredient is Lemma~\ref{lemma:Vanishing}. The proof of Theorem~\ref{thm:2} is carried out in the last part of the paper and is logically independent of the preceding sections. We use special degenerations for elliptically fibered K3 surfaces~\cite{MPT10}
\[ S \rightsquigarrow R\cup_E R \]
breaking the K3 surface into the union of two rational elliptic surfaces intersecting along an elliptic curve. Our proof then proceeds by degeneration formulae and virtual localization to reduce to the case of an elliptic curve.
\subsection{Acknowledgements} This paper has greatly benefited from discussions with Georg Oberdieck, Rahul Pandharipande, Qizheng Yin. Related discussions with Daniel Huybrechts, Andrey Soldatenkov about moduli of K3 surfaces are gratefully acknowledged. 

This project has received funding from the European Research Council (ERC) under the European Union’s
Horizon 2020 research and innovation programme (grant agreement No 786580).

\section{Monodromy}\label{sec:Monodromy}
 
\subsection{Non-positive case}
Let $S$ be a nonsingular projective K3 surface and $\beta\in H_2(S,\BZ)$ a curve class. If $\beta=0$, then
\[ \Mbar_{g,n}(S,0) \cong \Mbar_{g,n} \times S \] 
and the virtual class is given by
\[[\Mbar_{g,n}(S,0)]^{vir} = \begin{cases} [\Mbar_{0,n} \times S], &g=0 \\
\pr_2^*c_2(S) \cap [\Mbar_{1,n} \times S], & g=1 \\
0, & g\geq 2.
\end{cases} \]
Theorems~\ref{thm:1} and \ref{thm:2} evidently hold true in this case. We will henceforth assume that $\beta$ is a non-zero curve class. However,  contracted components will play a role for our vanishing result Lemma~\ref{lemma:Vanishing}.

As explained in \cite[Prop.\ 3]{PY16}, if 
\[ \langle \beta,\beta \rangle < -2\] one can deform $S$ such that $\beta$ is not generically effective. The corresponding moduli space is thus empty and Theorems~\ref{thm:1} and \ref{thm:2} hold by deformation invariance of the reduced virtual class.

An effective class $\beta\in H_2(S,\BZ)$ satisfying
\[ \langle \beta,\beta \rangle = -2\]
is always represented by a smooth rational curve $C\subset S$. Since $C\cong \BP^1$ is rigid, the evaluation maps $\ev_i$ factor through $C$, i.e.\
\[ \Mbar_{g,n}(S,\beta)\stackrel{\ev}{\to} C\times\ldots\times C\subset S\times\ldots\times S\, . \] 
Note that in this case, restriction induces a surjection
\[R^*(S^n)\to H^*(C\times\ldots\times C) \]
and therefore Theorem~\ref{thm:1} holds.

\subsection{Positive case}
For $\langle \beta,\beta \rangle \geq0$ we will use the `big monodromy' over the moduli space of K3 surfaces. More precisely, given $(S,\beta)$, there exists a family of K3 surfaces with central fiber $S$
\[\CX \to B \]
such that $\beta$ is a $(1,1)$ class in each fiber and the image of the monodromy representation
\[\rho\colon\pi_1(B,0)\to \Orth(H^2(S,\BZ))\] is a finite index subgroup of the subgroup of $\Orth(H^2(S,\BZ))$ fixing $\beta$. See~\cite[Sec.\ 6.4.3]{Hu16} for details. We write 
\[T= \langle \beta \rangle ^{\perp} \subset H^2(S,\BC). \] 
Since the monodromy group $\Image(\rho)$ is an arithmetic subgroup it is Zariski dense in $\SO(T)$\footnote{For $\langle \beta,\beta\rangle=0$ we consider $\{g\in\SO(H^2(S,\BC)\,|\,g(\beta)=\beta\}$ instead.}by Borel's Theorem~\cite{Bo66}:
\[\overline{\Image(\rho)}=\SO(T).\]
For clarity, let us comment on the relation between the monodromy and the Hodge group for general $S$ with 
\[\Pic(S) = \BZ \cdot \beta \] generated by an ample class $\beta$\footnote{Here and in Section~\ref{sec:Invariant theory} we consider the case $\langle \beta,\beta\rangle>0$ for simplicity.}. Since $T$ is an irreducible lattice of K3 type, the endomorphism ring 
\[K=\End_\BQ(T)\]
associated to $S$ is a field. For general $(S,\beta)$, we have $K=\BQ$. The computation by Zarhin~\cite{Za83} yields that the Hodge group of $S$ is the group of $K$-linear special isometries:
\[\Hdg(S)=\SO_K(T)\subset \SO(T)\, . \]
Thus, for general $S$ the Hodge group is the full special orthogonal group and equals the algebraic monodromy group. 

\section{Invariant theory}\label{sec:Invariant theory}
\subsection{Classical invariants}\label{subsec:Classical invariants}
We explain how the monodromy is used to prove Theorem~\ref{thm:1}.
Let $(S,\beta)$ be as above and write \[H=H^*(S,\BC) \] for the full cohomology. Consider the orthogonal decomposition \[H=H_{alg} \oplus T, \]
where $H_{alg}$ is spanned by $\{\one\, ,\beta\, ,\pt\}$ and 
\[T= \langle \beta \rangle ^{\perp} \subset H^2(S,\BC). \] 
We consider the action of the monodromy group $\SO(T)$ on the full cohomology $H$ by acting trivially on the algebraic part $H_{alg}$. Taking products induces a monodromy action on $H^*(S^n)$ for all $n$. We will require a description of the monodromy invariant part
\[ (H^{\otimes n})^{\SO(T)} \, .\]

The invariants are given by the trivial and the sign representations of the orthogonal group:
\[(H^{\otimes n})^{\SO(T)} = (H^{\otimes n})^{\Orth(T)}\oplus (H^{\otimes n})^{\Orth(T),\det}, \]
where 
\[ (H^{\otimes n})^{\Orth(T),\det}=\{v\in H^{\otimes n} \, | \, \forall \varphi\in \Orth(T): \varphi(v) = \det(\varphi) \cdot v\}. \]
Since $\SO(T)$ acts trivially on all powers $H_{alg}^{\otimes k}$, it suffices to determine the invariants of $\SO(T)$ acting on $T^{\otimes n}$. The answer is classically known \cite{Ho89,We39}. Let $\{\gamma_i\}$ be a basis of $T$ with intersection matrix $(g_{ij})$ and inverse $(g^{ij})$. The intersection pairing on $T$ corresponds to the class
\[ Q = \sum g^{ij} \gamma_i\otimes \gamma_j \in T^{\otimes 2}.\]
Consider the top wedge product of $T$ represented by some non-zero class $\Lambda$: 
\[\BC \cdot \Lambda=\Lambda^{21}T \subset H^{\otimes 42}\, ,\]
which is clearly a sign representation for $\Orth(T)$.
Then the invariants can be described as follows. The symmetric group $\Sym_n$ acts on $T^{\otimes n}$ by permutation and for $v\in T^{\otimes n}$ we denote by
\[ \BC \cdot \Sym_n(v) \] the linear subspace generated by all translates of $v$. Then\footnote{As above, the superscript `$\det$' indicates the sign representations.}
\[(T^{\otimes n} )^{\SO(T)}=(T^{\otimes n} )^{\Orth(T)} \oplus (T^{\otimes n} )^{\Orth(T),\det} \, ,\]
where
\[(T^{\otimes n} )^{\Orth(T)}= \begin{cases}
\BC \cdot \Sym_n(Q^{\frac{n}{2}}), &n\text{ even} \\
0,& n\text{ odd}
\end{cases} \]
and
\[(T^{\otimes n} )^{\Orth(T),\det}= \begin{cases}
\BC \cdot \Sym_n(\Lambda \otimes Q	^{\frac{n-21}{2}}), &n\geq 21\text{ odd} \\
0,&\text{else.}
\end{cases} \]
Note that $Q$ relates to the class of the diagonal of $S$ via
\[\Delta=Q + \one\otimes \pt + \frac{1}{\langle\beta,\beta\rangle}\beta\otimes \beta + \pt\otimes \one. \]
We arrive at a description of $(H^{\otimes n})^{\SO(T)}$. We have
\[(H^{\otimes n})^{\Orth(T)}=\BC \cdot \Sym_n( v_1 \otimes \ldots \otimes v_k\otimes \Delta^{\frac{n-k}{2}}), \]
where $v_i\in H_{alg}$ and $n-k$ even. Similarly, we find
\[ (H^{\otimes n})^{\Orth(T),\det}=\begin{cases}
0, &n<21 \\
\BC \cdot \Sym_n(\Lambda \otimes  w),& n\geq 21\end{cases} \]
where $w\in (H^{\otimes (n-21)})^{\Orth(T)}$. We will denote the former by\footnote{This is the subring studied in~\cite{Vo08}.}
\[ R^*(S^n)=(H^{\otimes n})^{\Orth(T)}. \] 
 \subsection{Vanishing}\label{subsec:Vanishing}
 Let $\alpha\in R^*(\Mbar_{g,n})$ be a tautological class. We apply the above monodromy argument to the class 
 \[ \pi^*\alpha \cap [\Mbar_{g,n}(S,\beta)]^{red}\]
 and find that \begin{equation} \ev_*\left( \pi^*\alpha \cap [\Mbar_{g,n}(S,\beta)]^{red}\right) = A + B\, ,\label{eq:A+B}\end{equation}
 where \[ A \in R^*(S^n)\, , \qquad B\in (H^{\otimes n})^{\Orth(T),\det}\, . \]
 We will show that $B=0$. Fix an orthonormal basis $\{\gamma_1,\ldots,\gamma_{21}\}$ of $T$ and let \[\gamma=\gamma_1\, .\]
 The class $\Lambda$ (see Section~\ref{subsec:Classical invariants}) can then be described as
 \[ \Lambda = \sum_{\sigma \in \Sym_n} (-1)^{\sigma} \sigma (\gamma_1\otimes\ldots \otimes \gamma_{21})\, . \] 
 Since 
 \[ \langle \Lambda, \Lambda \rangle \neq 0\,  \]
 we can find $B^{\vee}\in (H^{\otimes n})^{\Orth(T),\det}$, such that
 \[ \langle A, B^{\vee} \rangle = 0, \qquad \langle B, B^{\vee} \rangle = 1\, ,\]
 where $\langle \, , \, \rangle$ denotes the intersection pairing on $S^n$. We deduce Theorem~\ref{thm:1} from the following Lemma by pairing equation~\eqref{eq:A+B} against $B^{\vee}$. For this, let $v_i\in H^*(S)$ such that $\langle \gamma, v_i \rangle = 0$ for all $i$. We will use a bracket notation for Gromov--Witten invariants:
 \[ \left\langle \alpha; \tau_0(\gamma)^k \prod \tau_0(v_i) \right\rangle_{g,n,\beta}^S = \int_ {[\Mbar_{g,n}(S,\beta)]^{red}}\pi^*\alpha \cup \ev_1^*\gamma\ldots \ev_k^*\gamma \prod \ev_i^*v_i\, . \]
 
 \begin{lemma}\label{lemma:Vanishing} Assume that $\beta\in H_2(S,\BZ)$ is primitive and $k$ is odd. Then we have the vanishing \[ \left\langle \alpha; \tau_0(\gamma)^k \prod \tau_0(v_i) \right\rangle_{g,n,\beta}^S= 0 \, .\]
 \end{lemma}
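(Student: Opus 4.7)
The plan is to upgrade the $\SO(T_\BC)$-invariance of the integral coming from Zariski density of the monodromy to full $\Orth(T_\BC)$-invariance, and then conclude by classical invariant theory. This upgrade is carried out by producing an extra orientation-reversing symmetry from an auxiliary K3 surface in the deformation family that admits a non-symplectic involution.

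First, by deformation invariance of the reduced virtual class and the Zariski density of the monodromy in $\SO(T_\BC)$ established in Section~\ref{sec:Monodromy}, the integral
\[
I := \left\langle \alpha; \tau_0(\gamma)^k \prod \tau_0(v_i) \right\rangle_{g,n,\beta}^S,
\]
viewed as a polynomial of degree $k$ in $\gamma \in T$ and multilinear in the transcendental parts $t_i$ of the $v_i$, is $\SO(T_\BC)$-invariant. The algebraic parts of the $v_i$ are fixed under the monodromy action and enter as constants in what follows.

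Next, I would specialize $(S, \beta)$ within its $\beta$-polarized deformation family to a K3 surface $S'$ carrying a non-symplectic involution $\iota\colon S' \to S'$ that fixes $\beta$. The involution is chosen so that its invariant sublattice contains $\beta$ and has odd corank in $H^2(S', \BZ)$, which makes the determinant of $\iota^*$ on $T$ equal to $-1$. Existence of such $(S', \iota)$ for a primitive $\beta$ of any square follows from Nikulin's classification of non-symplectic involutions on K3 surfaces; the classical example is the double cover of $\BP^2$ when $\beta^2 = 2$. Post-composition with $\iota$ defines an automorphism of $\Mbar_{g,n}(S', \beta)$ preserving $\pi$ and the reduced virtual class, and the change-of-variables formula yields
\[
I(\gamma, v_i) = I(\iota^*\gamma, \iota^* v_i).
\]

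The group generated by the Zariski closure of the monodromy and by $\iota^*$ is then all of $\Orth(T_\BC)$, so $I$ is in fact $\Orth(T_\BC)$-invariant. By the first fundamental theorem of invariant theory, any $\Orth(T_\BC)$-invariant polynomial function on $T \times T^{n-k}$ is generated by the pairwise inner products $\langle \gamma, \gamma \rangle$, $\langle \gamma, t_i \rangle$ and $\langle t_i, t_j \rangle$. The hypothesis $\langle \gamma, v_i \rangle = 0$ forces $\langle \gamma, t_i \rangle = 0$, so every $\Orth(T_\BC)$-invariant monomial compatible with the degree and multilinearity constraints on $I$ has even degree in $\gamma$. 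For $k$ odd this forces $I = 0$, which is the desired vanishing.

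The main obstacle is the specialization step: one must produce, for every primitive $\beta$, a deformation to a K3 surface carrying a non-symplectic involution whose invariant sublattice contains $\beta$ and has odd corank in $H^2$. This is a concrete lattice-theoretic problem answered by Nikulin's criteria, and verifying realizability uniformly in the square and divisibility of $\beta$ requires some care; for $\beta^2 = 2$ the construction is immediate from the double cover of $\BP^2$.
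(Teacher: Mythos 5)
Your proposal takes a genuinely different route from the paper. The paper proves the vanishing by a lexicographic induction on $(g,n)$: when no point insertions appear, the strong Getzler--Ionel vanishing of \cite{FP05} places $\alpha$ on the boundary, the reduced class splits over boundary strata with exactly one reduced vertex (all others contracted, of genus $\leq 1$), and in each of three boundary configurations the number of $\gamma$-insertions at the reduced vertex stays odd while $(g,n)$ drops; point insertions are then removed by degenerating an elliptic K3 to $S\cup_E(E\times\BP^1)$ and invoking the upper-triangular relation between absolute and relative invariants --- and this degeneration step is precisely where primitivity of $\beta$ is used. You instead try to upgrade the $\SO(T)$-invariance coming from monodromy to $\Orth(T)$-invariance by exhibiting one geometric symmetry of determinant $-1$, namely a non-symplectic involution on a deformation of $S$. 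Your invariant-theoretic endgame is correct: an $\Orth(T_\BC)$-invariant multilinear form is a sum of complete contractions, and with $k$ odd some copy of $\gamma$ must be contracted against a class orthogonal to $\gamma$. If the geometric input were supplied, this would in fact prove Theorem~\ref{thm:1} directly, bypassing the Lemma and the pairing against $B^{\vee}$ altogether.

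The gaps, however, sit exactly where the content of your argument lies. First, the existence statement is not a routine appeal to Nikulin: a rank-one invariant lattice $\langle 2h-2\rangle$ is $2$-elementary only when $\beta^2=2$, so for every other degree you must produce an odd-rank $2$-elementary hyperbolic lattice from Nikulin's table, primitively containing a vector of square $2h-2$ that remains effective on the special surface, with that surface lying in the connected $\beta$-polarized deformation family; this is plausible (for instance $U\oplus\langle-2\rangle$ represents all even squares and has odd rank) but must actually be carried out, uniformly in $h$. Second, you assert without justification that the induced automorphism of $\Mbar_{g,n}(S',\beta)$ preserves the \emph{reduced} virtual class; a non-symplectic involution negates the holomorphic $2$-form, which is the datum entering the construction of the reduced obstruction theory, so this compatibility requires an argument (it should hold because the reduction depends on $\sigma$ only up to scale, but it is not automatic and is nowhere standard in the form you need). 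Third, a serious warning sign: nothing in your argument uses primitivity of $\beta$. If complete, it would establish $\Orth(T)$-invariance, hence Theorem~\ref{thm:1} and the Oberdieck--Pandharipande invariance property, for imprimitive classes as well --- statements the paper explicitly leaves open pending a degeneration formula for the reduced class. Either you have found a substantially stronger and shorter proof than the one in the paper, or one of the first two points fails; you must resolve that tension before the argument can be accepted.
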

 \begin{proof}[Proof of Theorem~\ref{thm:1}]
	 Let $B^{\vee}$ be as above. By the description of the invariants in Section~\ref{subsec:Classical invariants} we see that $B^{\vee}$ is a sum of ($\Sym_n$ translates of) classes
	 \[ \Lambda \otimes  w\, ,\]
	 with $w\in (H^{\otimes (n-21)})^{\Orth(T)}$. The class $\Lambda$ contributes one appearance of $\gamma$ for each summand. By the description of $(H^{\otimes m})^{\Orth(T)}$, each generator contributes an even number of $\gamma$'s coming from the diagonal classes. Thus, $B^{\vee}$ is a sum of classes each of which has on odd number of $\gamma$'s
 \end{proof}
 \begin{proof}[Proof of Lemma~\ref{lemma:Vanishing}]
 	The proof follows the induction in~\cite[Sec.\ 7]{MPT10} (see also~\cite[Sec.\ 3]{OP18}). We use the lexicographic order on $(g,n)$. A key ingredient is the strong form of Getzler--Ionel vanishing proved in~\cite{FP05}. Recall that for a stable graph $\Gamma$ and the corresponding boundary map 
 	\[ \iota\colon \Mbar_{\Gamma} \to \Mbar_{g,n} \]
 	we have the following splitting behavior for the reduced class. The pullback 
 	\[\iota^! [\Mbar_{g,n}(S,\beta)]^{red}\] decomposes as a sum of Gysin pullbacks of products of virtual classes. Exactly one vertex attains a reduced virtual class and all other vertices attain the usual virtual class. Since the virtual class is non-trivial only for $\beta=0$ and $g\in\{0,1\}$, very few stable graphs $\Gamma$ contribute to the splitting formula. Also, recall that in this case 
	\[ \Mbar_{g,n}(S,0) \cong \Mbar_{g,n} \times S \] 
	and the virtual class is given by
		\[[\Mbar_{g,n}(S,0)]^{vir} = \begin{cases} [\Mbar_{0,n} \times S], &g=0 \\
	\pr_2^*c_2(S) \cap [\Mbar_{1,n} \times S], & g=1 \\
	0, & g\geq 2\, . \end{cases} \]
	We start with the case of genus zero invariants:
	
	\noindent{\bf Base.} Let $(g,n,\beta)=(0,n,0)$ and consider the non-reduced invariant
 	\[\int_ {[\Mbar_{0,n}(S,0)]^{vir}}\pi^*\alpha \cup \ev_1^*\gamma\ldots \ev_k^*\gamma \prod \ev_i^*v_i\, = \deg(\alpha) \cdot \int_S \gamma^k \prod v_i\, . \]
 	The integral vanishes since $k$ is odd.
 	
	For $g=0$, $\alpha\in R^0(\Mbar_{g,n})$ and $\beta\neq 0$ we use the divisor equation. The corresponding reduced invariant vanishes since $\langle \beta,\gamma\rangle=0$.
 	
 	\noindent{\bf Induction.} We distinguish two cases depending on the codimension of $\alpha\in RH^*(\Mbar_{g,n})$. If no point insertion appears, i.e.\ $v_i\in H^{\leq 2}(S)$ for all $i$, then $\alpha\in RH^{\geq g}(\Mbar_{g,n})$ and $\alpha$ is supported on the boundary 
 	\[ \partial \Mbar_{g,n} \subset \Mbar_{g,n}\, . \]
 	There are only three types of boundary stata with possibly non-zero contributions.
 	
 	\noindent{\bf Boundary 1.} Consider the boundary map gluing the last two markings
 	\[\iota\colon \Mbar_{g-1,n+2} \to \Mbar_{g,n}\, . \]
 	If $\alpha = \iota_*\alpha'$, then
 	\[\left\langle \alpha; \tau_0(\gamma)^k \prod \tau_0(v_i) \right\rangle_{g,n,\beta}^S= \left\langle \alpha'; \tau_0(\gamma)^k \prod \tau_0(v_i) \tau_0(\Delta) \right\rangle_{g-1,n+2,\beta}^S= 0 \] 
 	by induction. Indeed, the diagonal class contributes two insertions of $\gamma$, i.e.\ the number of $\gamma$ insertions is again odd.
 	
 	\noindent{\bf Boundary 2.} Let $\ell\geq 1$ and $k=k_1+\ldots + k_{\ell}$. Consider the boundary map
 	\[\iota \colon \Mbar_{g-\ell, n-k+\ell} \times \Mbar_{1,k_1+1}\times \ldots  \times \Mbar_{1,k_{\ell}+1}\to \Mbar_{g,n} \, ,\]
 	where the markings specified by $I_j\subset\{1,\ldots,n\}$ lie on the $j$-th genus 1 component. Let $I= I_1 \cup \ldots \cup I_{\ell}$. Note that each gluing induces an appearance of the diagonal class $\Delta$. Recall the formula for the diagonal \[ \Delta = \one \otimes \pt + \frac{1}{\beta^2} \beta \otimes \beta +\sum \gamma_i\otimes \gamma_i + \pt\otimes\one\, . \]
 	Only the last summand produces possibly non-zero invariants at the genus 1 vertices.
 	Thus, if $\alpha= \iota_* (\alpha '\times\alpha_1\times\ldots\times \alpha_{\ell})$ then 
 	\begin{align*} \left\langle \alpha; \tau_0(\gamma)^k \prod \tau_0(v_i) \right\rangle_{g,n,\beta}^S = &\left\langle \alpha'; \tau_0(\gamma)^k \prod_{i\notin I} \tau_0(v_i)\tau_0(\pt)^\ell \right\rangle_{g-\ell,n-k+\ell,\beta}^S \\
 		&\cdot \prod_{j=1}^\ell \left( \deg \alpha_j\cdot  \int_S c_2(S) v_{I_j} \right)\, . \end{align*}
 	 The integral $\int_S c_2(S) v_{I_j}$ is non-zero only if all the insertions specified by $I_j$ are multiples of $\one\in H^*(S)$. Thus, all markings with $\gamma$ insertions lie on the non-contracted component and we can apply induction to find
 	\[ \left\langle \alpha; \tau_0(\gamma)^k \prod \tau_0(v_i) \right\rangle_{g,n,\beta}^S = 0\, .\]
 	
 	\noindent{\bf Boundary 3.} Consider the case of a genus $g$ component with $\ell$ contracted bubbles attached, i.e.\ 
 	\[ \iota\colon \Mbar_{g,n-k+\ell}\times \Mbar_{0,k_1+1}\times \ldots  \times \Mbar_{0,k_{\ell}+1}\to \Mbar_{g,n} \, .\]
 	We use the same notation as in the previous case. This time, the invariants at the genus 0 vertices are of the form
 	\[ \deg \alpha_i \cdot \int_S \delta v_{I_j} \, .\]
 	Here, $\delta$ is part of the basis $\{\one,\beta,\gamma_1,\ldots,\gamma_{21},\pt\}$ of $H^*(S)$ and comes from the appearance of the diagonal class at the gluing points. If $I_j$ does not contain any marking carrying a $\gamma$ insertion, the integral is non-zero only for $\delta\neq \gamma$ and we can apply induction (lower $n$). However, if $I_j$ contains a marking carrying a $\gamma$ insertion, there are two cases.
 	\begin{enumerate}
 		\item If $I_j$ contains no other such marking, then the integral is non-zero only for $\delta=\gamma$ in which case we obtain a new appearance of $\gamma$ at the genus $g$ vertex. The number of $\gamma$'s at the genus $g$ vertex remains unchanged.
 		\item If $I_j$ contains more than one such marking, then it contains exactly two and the integral is non-zero only for $\delta=\one$. In this case, the number of $\gamma$'s at the genus $g$ vertex is reduced by two and therefore stays odd.
 	\end{enumerate}
 	
 In each case the number of markings at the genus $g$ vertex decreases and we can apply induction.
 
 Finally, to deal with point insertions we work with an elliptic K3 surface $S$ with a section and use degeneration to the normal cone of an elliptic fiber $E$, i.e.\
 \[ S \rightsquigarrow S \cup_E E\times \BP^1\,  \]
 
 For this, we recall the relation between absolute invariants of $S$ and relative invariants of $(S,E)$.
  
 \noindent{\bf Upper triangularity.} Consider an elliptic K3 surface $S$ with a section. Degeneration to the normal cone of an elliptic fiber $E$ shows that the Gromov--Witten invariants of the relative geometry $(S,E)$ are determined by the absolute invariants. More precisely, the degeneration formula yields an upper triangular relation (see~\cite[Lem.\ 31]{MPT10}, \cite[Prop.\ 4]{OP18}).
 
 \noindent{\bf Point insertions.} We consider the case of point insertions. This time, we can not apply Getzler--Ionel vanishing. Instead, we use the degeneration
 \[ S \rightsquigarrow S \cup_E E\times \BP^1 \, . \]
and  specialize the point to lie on the bubble $E\times \BP^1$. The degeneration formula for the reduced class~\cite{LL05,LL06,MPT10} then removes the point insertion from
\[\left\langle \alpha; \tau_0(\gamma)^k \prod \tau_0(v_i) \tau_0(\pt) \right\rangle_{g,n,\beta}^S\, . \]
In fact, if $g= g' + g''$ is a splitting of the genus, then the corresponding summands in the degeneration formula are non-zero only for $g''\geq 1$, since the relative geometry $E\times \BP^1/E$ carries a relative and a non-relative point insertion. Thus, the genus $g'$ drops\footnote{We crucially use primitivity of $\beta$ at this point. A proposal for a degeneration formula in the imprimitive case has been made in~\cite[Sec.\ 4.6]{MPT10}. However, the genus reduction for imprimitive curve classes will be more subtle.} and we conclude the proof using the upper triangular relation between relative and absolute invariants.\end{proof}
 
 We deduce the invariance property from Theorem~\ref{thm:1}. 
 \begin{proof}[Proof of Corollary~\ref{cor:Inv}]
 Let \[\varphi:\Big(H^2(S,\BR)\, ,\langle,\rangle\Big) 
 \rightarrow \Big(H^2(\widetilde{S}, \BR),\langle,\rangle \Big) \]
 be a real isometry with
 \[\varphi(\beta)= \widetilde{\beta}\, .\]
 After extending $\varphi$ to 
 \[ \varphi \colon H^*(S^n) \to H^*(\widetilde{S}^n) \] 
 we see that $\varphi$ respects the subrings $R^*(S^n)$ resp.\ $R^*(\widetilde{S}^n)$. Applying Theorem~\ref{thm:1} we have
 \[ \varphi\ev_* \left(\prod \psi^{a_i} \cap [\Mbar_{g,n}(S,\beta)]^{red}\right) = \ev_* \left(\prod \psi^{a_i}\cap \ev_* [\Mbar_{g,n}(\widetilde{S},\widetilde{\beta})]^{red}\right)\, .\]
We conclude by integrating against the insertions $w_i$ and using once again that $\varphi$ is an isometry.
\end{proof}

\section{Classical techniques}\label{sec:Classical techniques}
This section contains a proof of Theorem~\ref{thm:2}. We use degeneration formulae and virtual localization. The Gromov--Witten classes for relative geometries are defined analogously to the absolute case. For stable maps to rubber~\cite{MP06} the corresponding GW classes are indicated by a tilde.

Let $S$ be a nonsingular projective K3 surface, $\beta\in H_2(S,\BZ)$ effective with $\langle\beta,\beta\rangle = 2h-2\geq0$ and $v\in R^*(S^n)$. By standard arguments we may choose $S$ to be an elliptic K3 surface with a section. 

We prove Theorem~\ref{thm:2} in the following five steps:
\begin{enumerate}
	\item Degenerate $S$ to a broken geometry 
	\[ S \rightsquigarrow R\cup_{E} R\to \BP^1\cup\BP^1\, ,\] with $R\cong \Bl_9(\BP^2)$ a rational elliptic surface and $E\subset R$ a smooth elliptic fiber. Since $\beta$ and all diagonals of $S^n$ lift to the family, we can apply the degeneration formula to $v\in R^*(S^n)$. We are reduced to proving that the classes $\GW{R/E}{g,n,\beta}(v|\one)$ of the pair $(R,E)$ are tautological.
	\item Degeneration to the normal cone 
	\[ R \rightsquigarrow R\cup_E E\times \BP^1 \]
	reduces to $R$ and the pair $(E\times \BP^1, E)$. Since $R$ can be deformed to a toric blow-up, the GW classes are tautological by virtual localization. It remains to consider classes of the form $\GW{E\times \BP^1/ E}{g,n,\beta}(v|\eta)$ with $\eta\in H^*(E)$.
	\item Virtual localization for the pair $(E\times \BP^1, E)$ reduces to $E$ and rubber GW classes.
	\item The rubber case involves $\Psi$-classes defined by the cotangent line at the relative divisor. Topological recursion relation removes the $\Psi$-classes. 
	\item The rubber GW classes are tautological by the recent work on DR cycles for target varieties and the elliptic curve case.
\end{enumerate}

\begin{proof}[Proof of Theorem~\ref{thm:2}]
		Consider an elliptic K3 surface $S \to \BP^1$ with a section. Denote the class of the section and the class of a fiber by $B$ resp.\ $F$ and let
		\[ \beta_h = B + h F \in H_2(S,\BZ). \]
		We will use the analogous notation for any of the surfaces $S$, $R$ and $E\times \BP^1$.
		
		We treat the non-positive case $\langle\beta,\beta\rangle=-2$ first. Then, the (cycle-valued) Gromov--Witten theory is determined by the normal bundle $\CN_{B/S}\cong \CO_{\BP^1}(-2)$. By virtual localization~\cite{GP99} all Gromov--Witten classes are tautological. From now on assume $h\geq 1$.

		\vspace{8pt}
		\noindent {\bf Step 1.} Our starting point is a degeneration of $S$ to an elliptically fibered surface $R\cup_E R$ (see \cite[Sec.\ 2.2, 2.3]{MPT10}), where $R\cong \Bl_9(\BP^2) \to \BP^1$ is the blow-up of a general elliptic pencil. 
		
	\begin{figure*}[h]
		\begin{center}
			\includegraphics{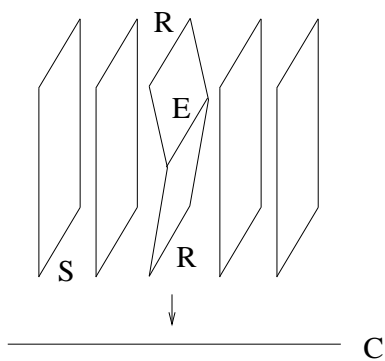} \vspace{-4mm}
			\caption{A degeneration of $K3$ surfaces, see~\cite[Figure 1]{MPT10}. \label{fig}}
		\end{center}
	\end{figure*}

	Degeneration formula~\cite{MPT10} for the reduced virtual class of $S$  expresses GW classes of $S$ in terms of classes of the pair $(R,E)$ with relative insertion $\one\in H^*(E)$.  
	More precisely, let $\Mbar_{g,n}(R/E,\beta)$ be a moduli space of stable relative maps to expanded degenerations of $(R,E)$ with multiplicity $1$ along the divisor $E$ and define the Gromov--Witten classes of $(R,E)$ using this moduli space. Let
	\[ \iota\colon \Mbar_{g',n'+1} \times \Mbar_{g'',n''+1} \to \Mbar_{g,n} \]
	be a gluing map. We have\footnote{Recall that we use the reduced virtual class for $S$. Also, we use the same notation $\iota$ for all gluing maps.}
		\[\GW{S}{g,n,\beta_h}(v) = \sum \iota_*\left( \GW{R/E}{g',n',\beta_{h'}}(v_I|1) \times \GW{R/E}{g'',n'',\beta_{h''}}(v_J|1)\right)\, , \]
	where the sum is over all
	\[ I\sqcup J = \{1,\ldots,n\}\, ,\quad  g' + g'' = g\, ,\quad h' + h'' = h\, . \]

	\vspace{8pt}
		\noindent{\bf Step 2.} Consider the degeneration to the normal cone of $E\subset R$
	\[ R \rightsquigarrow R\cup_E (E\times \BP^1).\]
		Since $B$ is rigid in $R$, the evaluation map 
		\[ \Mbar_{g,n}(R/E,\beta) \to E \]
		is constant. 
		We fix a self-dual basis $\{\eta\}$ of $H^*(E)$ given by $\{\one, \sfa, \sfb, \pt\}$. The degeneration formula yields:
		\[\GW{R}{g,n,\beta}(v)= \GW{R/E}{g,n,\beta}(v|\one)\times \GW{E\times \BP^1/E}{0,0,B}(\emptyset|\pt) + \ldots\, . \]
		By `$\ldots$' we mean products of GW classes involving $\GW{R/E}{g',n',\beta'}(v'|\one)$  with
		
		\begin{enumerate}
			\item $g'<g$ or
			\item $g'=g$ and $n'<n$ or
			\item $g'=g$ and $n'=n$ and $\beta - \beta'\neq 0$ is effective.
		\end{enumerate}
		We are thus reduced to the case of GW classes of $R$ and $(E\times \BP^1,E)$. \medskip
		
		Note that $R$ is deformation equivalent to a toric surface. Indeed, $R$ is isomorphic to the blow-up $\Bl_9(\BP^2)$ of $\BP^2$ in the base locus of a general elliptic pencil, i.e.\ in $9$ points. Thus, $R$ can be viewed as a blow-up $\Bl_p(R')$ and by induction we may assume that $R'$ is deformation equivalent to a toric surface. Moving the point $p$ to a fixed point of the torus action deforms $\Bl_p(R')$ to a toric blow-up of $R'$. 
		Virtual localization and deformation invariance of the virtual fundamental class imply that all GW classes of $R$ are tautological (see~\cite[p.\ 20]{GP99}).
		
		\vspace{8pt}
		\noindent{\bf Step 3.} Consider the $\BC^*$-action on $E\times \BP^1$ defined by the fiberwise action on $\BP^1$ with weights $(-1,0)$. Let $p_0=[1:0]$ and $p_{\infty}=[0:1]$ be the fixed points. The induced action on the tangent spaces $T_{p_0}$ and $T_{p_{\infty}}$ has weight $1$ resp.\ $-1$. We denote the equivariant parameter by~$t$. We use virtual localization to prove that the GW classes of the pair $(E\times \BP^1,E_0)$ are tautological. \medskip
			
		We first describe the fixed loci of the induced $\BC^*$-action on 
		\[\Mbar_{g,n}(E\times \BP^1/E_0, \beta)\, ,\]
		the moduli space of stable maps to expanded degenerations of $(E\times \BP^1, E_0)$ with mulitplicity $1$ along the relative divisor. The domain of a $\BC^*$-fixed stable map is a gluing 
			\[ C = C'\cup C_e \cup C'' \]
			of a unique rational compononent $C_e$ mapping isomorphically to $B$, with various curves attached over $p_0$ resp.\ $p_{\infty}$:
			\begin{itemize}
				\item $C'$ defines a stable map to rubber,\footnote{See e.g.\ \cite[Sec.\ 1.5]{MP06} for details about rubber targets.}
				\item $C''$ defines a stable map to $E_{\infty}$.
			\end{itemize} We let $x_{\infty}\in C'$ and $x_0\in C''$ be the gluing points.
			Fixing splitting data 
			\[\Gamma=(g',h',I) \]
			with $0\leq g' \leq g$, $0\leq h'\leq h$, $I \subset \{1,\ldots,n\}$
			corresponds uniquely to the fixed locus $\Mbar_{\Gamma}$ of stable maps with:
			\begin{itemize}
				\item $g(C')=g'$,
				\item $f_*([C'])=B+h'F$,
				\item the markings specified by $I \subset \{1,\ldots,n\}$ lie on $C'$.
			\end{itemize}
		
		Following~\cite{GP99} we use the standard tangent-obstruction sequence to compute the moving part of the obstruction bundle. We let $\BE^{\vee}$ be the dual of the Hodge bundle associated to $C''$ and denote by \[ c_z(F) = 1+z c_1(F) + \ldots + z^r c_r(F) \]
		the Chern polynomial of a bundle $F$ of rank $r$. Also, let $\psi_{\infty}$ be the cotangent line class at the relative marking $x_{\infty}$ and $\psi_0$ the cotangent line class at $x_0$. Then
		\[ \frac{1}{e(N_{\Gamma}^{vir})} = \frac{ (-t)^{g''-1} {c_{-1/t}(\BE ^{\vee})}}{(t-\psi_{\infty} ) (-t - \psi_0) }\, ,\]

		Let $v_1,\ldots, v_n \in H^*(E\times \BP^1)$, $\delta\in H^*(E)$ and set $v=\prod \ev_i^*(v_i)$. We denote the insertions specified by the splitting
		\[ \{1,\ldots, n \} = I \sqcup J \] 
		by $v_I$ and $v_J$ respectively. We choose equivariant lifts $\widetilde{\delta}$ of $\delta$ and $\widetilde{v_I}$ of $(v_I)_{|E_0}$ and let $\xi(\Gamma)$ be the usual combinatorial factor. Then
	\begin{align*} \GW{E\times \BP^1/E}{g,n,\beta}(v|\delta) = \mathlarger{\sum}_{\Gamma} \frac{\xi(\Gamma)}{e(N_{\Gamma}^{vir})} 
	 & \iota_*\Big( \sum_{\eta}  \GW{E\times \BP^1\sim}{g_1,n_1,\beta_{h_1}}(\widetilde{\delta}|\widetilde{v_I}|\eta)  \\
	& \times\GW{E}{g_2,n_2+1,h_2}(\eta^{\vee}(v_J)_{|E_{\infty}})\Big) \, .
	\end{align*}
		
		\vspace{8pt}
		\noindent{\bf Step 4.} Consider a moduli space of stable maps to rubber \[\Mbar^{\sim}=\Mbar_{g,n} (E\times\BP^1/(E_{0}\cup E_{\infty}),\beta)^{\sim}.\]
		Note that the expression in Step~3 involves powers of the class
		\[\psi_{\infty} \in H^2(\Mbar^{\sim},\BZ)\, . \]
		We describe briefly how to remove them (cf.\ \cite[Sec.\ 1.5]{MP06}). Let $[f]$ be a moduli point and assume that the domain of $f$ carries at least one non-relative marked point~$q$. Let $\CM_{0,3}$ be the stack of genus $0$ prestable curves with three marked points. Consider the map
		\begin{align*} \Mbar^{\sim} &\to \CM_{0,3} \\ [f] &\mapsto\pr^{-1} \pr(q)\, , \end{align*}
		 where $\pr$ is the projection from expanded degenerations to $E\times \BP^1$.
		 The contangent line class at the relative divisor $\psi_{\infty}$ is the pull-back of the cotangent line class at the third marking. Topological recursion relation yields
		 \begin{align*}\GW{E\times \BP^1\sim}{g,n,\beta}(\widetilde{\delta}|\widetilde{v}\psi_{\infty}^k |\eta) = \sum \zeta(\nu)  &\iota_* \Big(\GW{E\times \BP^1\sim}{g',n',\beta'}(\widetilde{\delta}|\widetilde{v'}|\nu)\\
		 &\times \GW{E\times  \BP^1\sim}{g'',n'',\beta''}(\nu^{\vee}|\widetilde{v''}\psi_{\infty}^{k-1}|\eta) \Big)\, ,\end{align*}
		where $\zeta(\nu)$ is a combinatorial factor. We thus remove all powers of $\psi_{\infty}$-classes recursively. If the domain of $f$ does not carry a non-relative marking, we use the dilaton equation to add a marking $q$:
		 \[\GW{E\times \BP^1\sim}{g,n,\beta}(\widetilde{\delta}|\widetilde{v}\psi_{\infty}^k |\eta) = \frac{1}{2g+n}\GW{E\times \BP^1\sim}{g,n+1,\beta}(\widetilde{\delta}|\widetilde{v}\psi_q\psi_{\infty}^k |\eta)\, .\]
		 
		\vspace{8pt}
		\noindent{\bf Step 5.} Consider a rubber Gromov--Witten class $\GW{E\times \BP^1\sim}{g,n,\beta}(\widetilde{\delta}|\widetilde{v}|\eta)$ without $\psi_{\infty}$-classes. Let \[\epsilon\colon \Mbar^{\sim} \to \Mbar_{g,n+2}(E,h) \]
		be the natural forgetful morphism. We have
		\[ \GW{E\times \BP^1\sim}{g,n,\beta}(\widetilde{\delta}|\widetilde{v}|\eta)= \pi_* \left( \ev^*(\widetilde{\delta}\widetilde{v}\eta) \cap \epsilon_*[\Mbar^{\sim}]^{vir} \right)\]
		The push-forward $\epsilon_*[\Mbar^{\sim}]^{vir}$ of the virtual class is the $E$-valued double ramification cycle~\cite{JPPZ18}; it is a tautological class on $\Mbar_{g,n+2}(E,h)$. The Gromov--Witten classes of $E$ are tautological by~\cite{Ja17} and the proof is complete.
		\end{proof}

\providecommand{\bysame}{\leavevmode\hbox to3em{\hrulefill}\thinspace}
\providecommand{\MR}{\relax\ifhmode\unskip\space\fi MR }
% \MRhref is called by the amsart/book/proc definition of \MR.
\providecommand{\MRhref}[2]{%
	\href{http://www.ams.org/mathscinet-getitem?mr=#1}{#2}
}
\providecommand{\href}[2]{#2}

	\end{document}